\documentclass[a4paper,11pt]{amsart}
\usepackage{amsthm,amsmath,amssymb}
\usepackage{graphicx,subfigure}
\usepackage{hyperref}
\usepackage[utf8]{inputenc}
\usepackage{enumitem}
\usepackage{cmtt}

% *************************************************************
\def\ITEMMACRO #1 ??? #2 ???{\par\medskip\noindent%
%%         % Vorsicht mit Spaces in diesem Macro
\hangindent=#2em\setbox0\hbox{#1 \kern5pt}%
\ifdim\wd0<\hangindent\setbox0\hbox to\hangindent{\hss#1\quad}\fi%
\box0\ignorespaces}
\def\Item#1{\ITEMMACRO #1 ??? 2.5 ???}

\def\Bitem{\Item{\hss$\bullet$}}
% *************************************************************

\usepackage{graphicx}
\usepackage{color}
\graphicspath{{figs/}}

\renewenvironment{enumerate}{\begin{enumorig}[label=\textup{(\roman*)}, noitemsep, topsep=2pt plus 2pt, labelindent=.2em, leftmargin=*, widest=iii]}{\end{enumorig}}

\renewenvironment{itemize}{\begin{itemorig}[label=\textbullet, noitemsep, topsep=2pt plus 2pt, labelindent=.5em, labelsep=.5em, leftmargin=*]}{\end{itemorig}}

\usepackage{todonotes}

\newtheorem{theorem}{Theorem}

\newtheorem{proposition}[theorem]{Proposition}

\theoremstyle{remark}
\newtheorem*{claim}{Claim}

\newenvironment{claimproof}{%
\noindent%
\textit{Proof of Claim.}%
}
{
\hfill $\triangle$%
\medskip
}

\let\leq\leqslant
\let\geq\geqslant
\let\setminus\smallsetminus

\def\calC{\mathcal{C}}
\def\calL{\mathcal{L}}
\def\calG{\mathcal{G}}
\def\calF{\mathcal{F}}

\def\calR{\mathcal{R}}

\newcommand{\set}[1]{\left\{#1\right\}}

\makeatletter
\let\old@setaddresses\@setaddresses
\def\@setaddresses{\bgroup\parindent 0pt\let\scshape\relax\old@setaddresses\egroup}
\makeatother

\begin{document}

\title{On-line coloring between two lines}

\author[S.~Felsner]{Stefan Felsner}
\author[P.~Micek]{Piotr Micek}
\author[T.~Ueckerdt]{Torsten Ueckerdt}

\address[S.~Felsner]{Technische Universit\"at Berlin, Berlin, Germany}
\email{felsner@math.tu-berlin.de}

\address[P.~Micek]{Theoretical Computer Science Department, 
Faculty of Mathematics and Computer Science, Jagiellonian University, Poland}
\email{piotr.micek@tcs.uj.edu.pl}

\address[T.~Ueckerdt]{Department of Mathematics, 
Karlsruhe Institute of Technology, Germany}
\email{torsten.ueckerdt@kit.edu}

\thanks{P.\ Micek is supported by the Polish National Science Center 
within a grant UMO-2011/03/D/ST6/01370 and by the Polish Ministry of Science and Higher Education within the Mobility Plus program.}

\begin{abstract}
  We study on-line colorings of certain graphs given as intersection
  graphs of objects ``between two lines'', i.e., there is a pair of
  horizontal lines such that each object of the representation is a
  connected set contained in the strip between the lines and touches
  both.  Some of the graph classes admitting such a representation are
  permutation graphs (segments), interval graphs (axis-aligned
  rectangles), trapezoid graphs (trapezoids) and cocomparability
  graphs (simple curves). We present an on-line algorithm coloring
  graphs given by convex sets between two lines that uses
  $O(\omega^3)$ colors on graphs with maximum clique size
  $\omega$. 

  In contrast intersection graphs of segments attached to a single line 
  may force any on-line coloring algorithm to use an arbitrary number
  of colors even when $\omega=2$.

  The {\em left-of} relation makes the complement of intersection
  graphs of objects between two lines into a poset. As an aside we
  discuss the relation of the class $\calC$ of posets obtained from
  convex sets between two lines with some other classes of posets: all
  $2$-dimensional posets and all posets of height~$2$ are in $\calC$
  but there is a $3$-dimensional poset of height~$3$ that does not
  belong to $\calC$.

  We also show that the on-line coloring problem for
  curves between two lines is as hard as the on-line chain partition
  problem for arbitrary posets.
\end{abstract}

\maketitle

\section{Introduction}

%% ON-LINE COLORING %%
In this paper we deal with on-line proper vertex coloring of graphs.
In this setting a graph is created vertex by vertex where each 
new vertex is created with all adjacencies to previously created vertices.
An \emph{on-line coloring algorithm} colors
each vertex when it is created, immediately and irrevocably, such that
adjacent vertices receive distinct colors.  In particular, when
coloring a vertex an algorithm has no information about future
vertices.  This means that the color of a vertex depends only on the
graph induced by vertices created before.  It is convenient to imagine
that vertices are created by some adaptive adversary so that 
the coloring process becomes a game between that adversary and an on-line
algorithm.

%% HOPELESS IN GENERAL %%
We are interested in on-line algorithms using a number of colors that
is bounded by a function of the chromatic number of the input graph.
For general graphs this is too much to ask for.  Indeed, it is a
popular exercise to devise a strategy for adversary forcing any
on-line algorithm to use arbitrarily many colors on a forest.
However, some restricted graph classes admit competitive on-line
coloring algorithms. Examples are $P_5$-free graphs~\cite{KPT95},
interval graphs~\cite{KT81} and cocomparability graphs~\cite{KPT94}.
All of these classes are covered by the main result of Penrice,
Kierstead and Trotter in~\cite{KPT94} that says that for any tree $T$
with radius $2$, the class of graphs that do not contain an induced
copy of $T$ can be colored on-line with the number of colors 
depending only on $T$ and the clique number of the input graph.

%% ON-LINE GEOMETRIC REPRESENTATION %%
We are interested in situations where the on-line graph is presented
with a geometric intersection representation. A graph $G$ is an
\emph{intersection graph of a family $\mathcal F$ of sets} if the
vertices of $G$ and the elements of $\mathcal F$ are in bijection such
that two vertices are adjacent in $G$ if and only if the corresponding
sets intersect.  For convenience, we identify the intersection graph
of the family $\mathcal F$ with $\mathcal F$ itself.  The most
important geometric intersection graphs arise from considering
compact, arc-connected sets in the Euclidean plane $\mathbb{R}^2$.  In
the corresponding on-line coloring problem such objects are created
one at a time and an on-line coloring algorithm colors each set when
it is created in such a way that intersecting sets receive distinct
colors.  For many geometric objects the on-line coloring problem is
still hopeless, e.g., disks and axis-aligned squares (Erlebach and
Fiala~\cite{EF02}).  Since any intersection graph $G$ of translates of
a fixed compact convex set in the plane has a maximum degree bounded
by $6\omega(G)-7$ (Kim, Kostochka and Nakprasit~\cite{KKN04}), any
on-line algorithm that uses a new color only when it has to colors $G$
with at most $6\omega(G)-6$ colors.

%% OUR CONVEX RESULT %%
In this paper we consider the on-line coloring problem of geometric
objects spanned between two horizontal lines, that is, arc-connected
sets that are completely contained in the strip $\mathbf{S}$ between
the two lines and have non-empty intersection with each of the lines.
Clearly, such a family imposes a partial order on its elements where
$x < y$ if $x$ and $y$ are disjoint and $x$ is contained in the left
component of $\mathbf{S}\setminus y$.  Hence, two sets intersect if
and only if they are incomparable in the partial order, i.e., the
intersection graph is a cocomparability graph. In particular, $\chi(G)
= \omega(G)$ for all such graphs $G$.  Conversely every
cocomparability graph has a representation as intersection graph of
$y$-monotone curves between two lines. The usual way to state this
result is by
saying that cocomparability graphs are function graphs, see~\cite{GRU83}
or~\cite{Lov83}. If the representation is given the cocomparability
graph comes with a transitive orientation of the complement. In this
setting there is an on-line algorithm that uses $\omega^{O(\log
  \omega)}$ colors when $\omega$ is the clique number of the graph
(Bosek and Krawczyk~\cite{BK10}, see also~\cite{BKKMS}). This subexponential function in
$\omega$ is way smaller than the superexponential function arising from the
on-line algorithm for cocomparability graphs from~\cite{KPT94}. 
The best known lower bound for on-line coloring of 
cocomparability graphs is of order $\Omega(\omega^2)$, see~\cite{BFKKMM-survey}. 
We present an on-line algorithm that uses only $O(\omega^3)$ colors
on convex objects spanned between two lines.

Intersection graphs of convex sets spanned between two lines
generalize several well-known graph classes.  

\Bitem {\em Permutation graphs} are
intersection graphs of segments spanned between two lines and posets
admitting such cocomparability graphs are the $2$-dimensional
posets.  

\Bitem {\em Interval graphs} are intersection graphs of axis-aligned
rectangles span\-ned between two horizontal lines.

\Bitem {\em Bounded tolerance graphs}  are intersection graphs of
parallelograms with two horizontal edges spanned between two
horizontal lines.\\
(Boun\-ded tolerance graphs were introduced in~\cite{GM82})

\Bitem {\em Triangle graphs} (a.k.a.~PI-graphs) 
are intersection graphs of triangles with a horizontal side spanned between
two horizontal lines. (Triangle graphs were introduced in~\cite{CK87})

\Bitem {\em Trapezoid graphs} are intersection graphs of trapezoids
with two horizontal edges spanned between two horizontal lines.
Posets admitting such cocomparability graphs are the posets of
interval-dimension at most $2$.  (Trapezoid graphs were
independently introduced in~\cite{CK87,DGP88}).
\medskip

\noindent
Effective on-line coloring algorithms have been known for some of
these classes: 

\Bitem {\em Permutation graphs} can be colored on-line with
$\binom{\omega+1}{2}$ colors (Schmerl 1979 unpublished,
see~\cite{BFKKMM-survey}).  Kierstead, McNulty and
Trotter~\cite{KMT84} generalized Schmerl's idea and gave an on-line
algorithm chain partitioning $d$-dimensional posets, presented with
$d$ linear extensions witnessing the dimension, and using
$\binom{\omega+1}{2}^{d-1}$ chains, here $\omega$ is the width of the
poset.

\Bitem {\em Interval graphs} can be colored on-line with
$3\omega-2$ colors (Kierstead and Trotter~\cite{KT81}).
\medskip

\noindent
An easy strategy for on-line coloring is given by \emph{First-Fit}, which is the strategy that colors each incoming vertex with the least admissible natural number.
While First-Fit uses $O(\omega)$ colors on interval orders (see~\cite{KST}) it is easy to trick this strategy and force arbitrary number of colors on permutation graph of clique-size $2$ (see~survey~\cite{BFKKMM-survey}).
The behavior of the First-Fit algorithm on $p$-tolerance graphs ($0<p<1$), a subclass of bounded tolerance graphs, was studied in~\cite{KS11}.
First-Fit uses there $O(\frac{\omega}{1-p})$ colors.

\begin{theorem}\label{thm:main-convex}
  There is an on-line algorithm coloring convex sets spanned between
  two lines with $O(\omega^3)$
  colors when  $\omega$ is the clique number of the intersection graph.
\end{theorem}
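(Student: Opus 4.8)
The plan is to color the intersection graph $G$ by taking a product of two colorings, one of ``permutation type'' and one of ``interval type.'' First I would record the geometric dictionary. Writing $[t_x^-,t_x^+]$ and $[b_x^-,b_x^+]$ for the traces of a convex object $x$ on the top and bottom line, a separating line meets each line in a single point, so whenever $x$ lies left of $y$ one has $t_x^+\le t_y^-$ and $b_x^+\le b_y^-$; in particular, overlapping top traces, or overlapping bottom traces, force an intersection. Thus the top traces, and likewise the bottom traces, induce interval graphs of clique number at most $\omega$. The converse fails only because a convex object can bulge out in the middle of the strip: two objects with $t_x^+\le t_y^-$ and $b_x^+\le b_y^-$ may still meet at some intermediate height, since the right boundary $x^+(h)$ is concave and the left boundary $y^-(h)$ is convex as functions of the height $h$.

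Next I would split the edges of $G$ using a two-dimensional poset. Let $Q$ be defined by $x<_Q y$ iff $t_x^-<t_y^-$ and $b_x^-<b_y^-$, i.e. the intersection of the top-left-endpoint order and the bottom-left-endpoint order. By the observation above, ``left-of'' implies $<_Q$, so any two $Q$-incomparable objects intersect; hence the permutation graph $\mathrm{inc}(Q)$ is a subgraph of $G$, and its clique number, the width of $Q$, is at most $\omega$. Since the two endpoint orders are revealed on-line as each object arrives, I can feed them to the on-line chain-partitioning algorithm for $d$-dimensional posets with $d=2$ and color $\mathrm{inc}(Q)$ with $\binom{\omega+1}{2}$ colors. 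Every edge of $G$ is now either $Q$-incomparable, hence caught here, or $Q$-comparable.

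It remains to color the residual graph $H$ whose edges are the $Q$-comparable intersecting pairs, namely exactly the top/bottom overlaps and the middle ``bulge'' intersections that the two-dimensional order does not see. A clique in $H$ is a set of pairwise intersecting objects, so $\omega(H)\le\omega$, and the goal is an on-line coloring of $H$ with $\O{\omega}$ colors. Granting this, the product coloring that assigns to $x$ the pair consisting of the chain index of $x$ in the partition of $Q$ and the color of $x$ in $H$ is a proper coloring of $G$ with $\binom{\omega+1}{2}\cdot\O{\omega}=\O{\omega^3}$ colors: a $Q$-incomparable edge receives distinct chain indices, while a $Q$-comparable edge is an edge of $H$.

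The main obstacle is precisely this residual lemma: that convexity forces $H$ to be on-line colorable with $\O{\omega}$ colors. Here I would exploit that for a $Q$-comparable pair the gap function $x^+(h)-y^-(h)$ is concave, so the set of heights at which $x$ and $y$ meet is a single interval; I expect this to let me realize $H$, or its complement, as an interval graph, equivalently as the incomparability graph of an interval order, after which Kierstead--Trotter, or First-Fit on interval orders, supplies the $\O{\omega}$ bound. This is the step where convexity is indispensable: for general $y$-monotone curves the analogous residual has no such structure, consistent with the fact that curves between two lines are as hard to color on-line as arbitrary posets are to partition into chains.
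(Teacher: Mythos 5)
The first half of your proposal is essentially the paper's own construction: your poset $Q$ (intersection of the top-left-endpoint and bottom-left-endpoint orders) and the Schmerl/Kierstead--McNulty--Trotter coloring with $\binom{\omega+1}{2}$ colors is exactly the paper's pair $(\alpha_v,\beta_v)$, and your product-coloring logic is sound. The gap is the residual lemma, and it is not just unproven but false: the graph $H$ of $Q$-comparable intersecting pairs is in general neither an interval graph nor the complement of one. Concretely, in the strip between $y=0$ and $y=1$ take four trapezoids given by (top interval, bottom interval): $a=([0,10],[1,10])$, $c=([1,10],[0,10])$, $b=([2,10],[3,10])$, $d=([3,10],[2,10])$. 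All four contain the vertical segment $\set{9}\times[0,1]$, so they pairwise intersect; but $a\|_Q c$ and $b\|_Q d$ (their top-left and bottom-left endpoints are oppositely ordered), while $a,c<_Q b,d$. Hence $H$ restricted to $\set{a,b,c,d}$ is an induced $C_4$, so $H$ is not chordal, hence not interval. The same trick applied to the $2$-dimensional poset with $t$ minimal and $t$ maximal elements, all comparable across, plants an induced $K_{t,t}$ in $H$ with $t=\floor{\omega/2}$; and two intersecting comparable pairs placed far apart horizontally give an induced $2K_2$ in $H$, i.e.\ an induced $C_4$ in its complement. Your concavity observation only attaches an interval of heights to each \emph{edge} of $H$; it does not produce intervals for \emph{vertices}, and the example above shows no such representation can exist.

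What the paper does instead is weaker but sufficient, and your scheme can be repaired to match it. Do not color $H$ globally (the $K_{t,t}$'s with $t\approx\omega/2$ ruin any hope of an $O(\omega)$ bound there); instead run First-Fit \emph{separately inside each chain} of your partition of $Q$ --- the product coloring only ever needs to separate intersecting pairs lying in a common chain. Within a chain, the canonical base segments joining $(t_x^-,1)$ to $(b_x^-,0)$ are pairwise non-crossing, hence disjoint, and each is contained in its convex set; from this the paper shows (its second Claim) that the intersection graph restricted to the chain, which is an induced subgraph of $G$ and thus a cocomparability graph, contains no induced $K_{3,3}$: among six sets with left-to-right disjoint bases $u_1,u_2,v_1,v_2,\dots$, if $u_1\cap u_2=\emptyset$ and $v_1\cap v_2=\emptyset$ then $u_1$ lies left of the base of $u_2$ and $v_2$ lies right of the base of $v_1$, forcing $u_1\cap v_2=\emptyset$. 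The $O(\omega)$ per-chain bound then comes not from interval-graph theory but from the theorem of Dujmovi\'c, Joret and Wood that First-Fit uses at most $8(2t-3)\omega$ colors on cocomparability graphs with no induced $K_{t,t}$ (here $t=3$, giving $24\omega$). This imported result is the genuinely missing ingredient in your argument; with it, your chains play the role of the paper's classes $X(\alpha,\beta)$ and the $\binom{\omega+1}{2}\cdot 24\omega = O(\omega^3)$ bound follows.
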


Proofs are deferred to later sections.

%%%%%%%%%%%%%%%%%%%%%%%%%%%%%%%%%%%%%%%%%%%%%%%%%%%%%%%%%%%%%%%%%%%%%%
% \PsFigCap{20}{poset-classes}{The containment order of some classes of graphs given by objects
%   between two lines together with
%   the performance guarantee for the best known on-line coloring
%   algorithm (given the intersection representation as input).} 
%%%%%%%%%%%%%%%%%%%%%%%%%%%%%%%%%%%%%%%%%%%%%%%%%%%%%%%%%%%%%%%%%%%%%%
\begin{figure}[htb]
 \centering
 \includegraphics{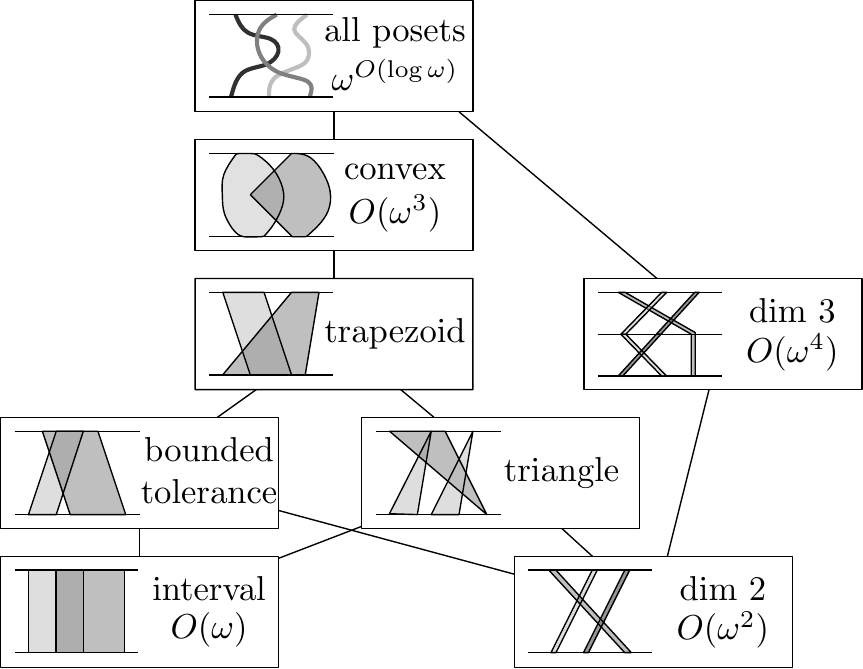}
 \caption{The containment order of some classes of graphs given by objects
  between two lines together with
  the performance guarantee for the best known on-line coloring
  algorithm (given the intersection representation as input).}
  \label{fig:poset-classes}
\end{figure}

\medskip\noindent
A poset is called \emph{convex} if its cocomparability graph is 
an intersection graph of convex sets spanned between two lines.
We give a short proof that all height $2$ posets are convex.
All $2$-dimensional posets are convex but not all $3$-dimensional.

\begin{proposition}\label{prop:convex-vs-dim-and-height}\hfill
\begin{enumerate}
\item Every height $2$ poset is convex;
\item There is a $3$-dimensional height $3$ poset that is not convex.
\end{enumerate}
\end{proposition}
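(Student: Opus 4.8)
My plan is to treat the two parts by completely different means: a direct geometric construction for (i), and an impossibility argument resting on the convexity of the bounding curves for (ii). In both cases I would first record the elementary characterization of the \emph{left-of} relation in terms of boundary curves. Every set touching both lines is described by a convex left boundary $\ell(y)$ and a concave right boundary $r(y)$ with $\ell\le r$ on $[0,1]$, and two such sets are disjoint with the first to the left if and only if $r_{\mathrm{first}}(y)\le \ell_{\mathrm{second}}(y)$ for all $y\in[0,1]$ (a separating line must be non-horizontal, since both sets meet both lines, so it induces this pointwise inequality). Comparability thus translates into pointwise domination of a concave function by a convex one, and incomparability into the failure of this at some height.

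For (i), a height $2$ poset $P$ splits its ground set into the minimal elements $B$ and the remaining, necessarily maximal, elements $T$, the only relations being $b<t$ with $b\in B$, $t\in T$. I would give every $B$-set the constant left boundary $-M$ and every $T$-set the constant right boundary $+M$ for large $M$, which makes the $B$-sets pairwise intersecting and the $T$-sets pairwise intersecting, realizing the two cliques of the cocomparability graph. It then remains to choose a concave right boundary $g_i$ for each $b_i\in B$ and an affine (hence convex) left boundary $f_j(y)=s_jy+c_j$ for each $t_j\in T$ so that $g_i\le f_j$ everywhere precisely when $b_i<t_j$.

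The heart of (i) is that an \emph{arbitrary} bipartite relation can be realized this way. With $f_j$ affine, ``$g_i\le f_j$ everywhere'' becomes ``the point $(s_j,c_j)$ lies on or above the convex curve $\phi_i$'', where $\phi_i(s)=\sup_y\bigl(g_i(y)-sy\bigr)$ is the convex conjugate of the concave $g_i$; conversely every convex $\phi_i$ arises from some concave $g_i$ by biconjugation. Placing the points $(s_j,c_j)=(j,j^2)$ on the parabola and prescribing $\phi_i(j)=j^2-\varepsilon$ when $b_i<t_j$ and $\phi_i(j)=j^2+\varepsilon$ otherwise, the prescribed values have second differences $2+O(\varepsilon)>0$, so for small $\varepsilon$ they are discretely convex and extend to a convex $\phi_i$; dualizing back produces the required $g_i$. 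I expect this part to be routine once the conjugation-and-perturbation idea is in place; the only thing to watch is keeping the conjugation honest on the finite slope set $\{1,\dots,|T|\}$.

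For (ii), I would exhibit a concrete height $3$ poset $P_0$ and verify that it has dimension $3$; a natural candidate is two copies of the standard example $S_3$ (the poset on $a_1,a_2,a_3<b_1,b_2,b_3$ with $a_i<b_j$ iff $i\ne j$) stacked on a common middle antichain $\{m_1,m_2,m_3\}$, with $a_i<m_j$ and $m_i<b_j$ exactly for $i\ne j$. Assuming a convex representation, each middle set has a convex left boundary $\ell_i$ and a concave right boundary $r_i$; each relation forces a pointwise inequality between boundaries, while each crossing forces the reverse strict inequality at \emph{some} height. The plan is to use the crossings to force a single concave $r_i$ to lie above prescribed left boundaries at two separated heights, and then invoke \emph{concavity} of $r_i$ to conclude that $r_i$ is large at an intermediate height as well, precisely where a comparability of $P_0$ forces it to be small. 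The main obstacle is exactly this final step: arranging the gadgets so that the forced crossings straddle a height at which an upper bound on the same concave boundary is active, converting the qualitative ``at some height'' statements into a contradiction at one controlled height, and confirming that $P_0$ (with dimension and height verified to be $3$) genuinely forces this configuration. I expect this to be the delicate part, and I would be prepared to adjust the choice of $P_0$ until the concavity obstruction bites.
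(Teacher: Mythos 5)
Your treatment of part (i) takes a genuinely different route from the paper (the paper draws the maximal elements as pairwise crossing segments, picks on each segment $y$ a point $r_y$ exposed to the left, and represents each minimal element $x$ as the convex hull of two far-left points $p,q$ and the points $r_y$ of the maximal elements incomparable to $x$), but your instantiation contains a step that fails. Since the supremum defining $\phi_i(s)=\sup_{y\in[0,1]}\bigl(g_i(y)-sy\bigr)$ runs over the bounded height range $[0,1]$, every realizable $\phi_i$ is a supremum of affine functions of $s$ with slopes $-y\in[-1,0]$, hence non-increasing and $1$-Lipschitz; so your claim that \emph{every} convex $\phi_i$ arises by biconjugation is false in this setting. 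In particular the prescription $\phi_i(j)=j^2\pm\varepsilon$ is increasing, with consecutive differences larger than $1$, so no concave $g_i$ on the strip has these conjugate values --- this is exactly the ``keeping the conjugation honest'' point you flagged and did not resolve, and it kills the construction as written. It is repairable: place the interpolation points on a decreasing, strictly convex curve whose slopes lie strictly inside $(-1,0)$, e.g.\ $s_j=j$ and $c_j=\psi(j)$ with $\psi(s)=(s-K)^2/(2K)$, $K=|T|+1$, and take $\varepsilon$ small compared with the second difference $1/K$ and the slope margin $1/K$; then the perturbed values extend to a non-increasing $1$-Lipschitz convex $\phi_i$, and dualizing back produces $g_i$. (Minor point: disjointness of compact sets requires the strict inequality $r<\ell$, not $\le$.)

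For part (ii) the gap is fatal rather than reparable: you have no proof, only a plan whose decisive step you acknowledge is open, and the candidate $P_0$ you propose cannot work because that poset \emph{is} convex. Explicitly, in the strip $0\le y\le 1$ put $\tau_1=0.2$, $\tau_2=0.5$, $\tau_3=0.8$ and take $a_i=\{(x,y):\,-10\le x\le -|y-\tau_i|\}$, $m_i=\{(x,y):\,(y-\tau_i)^2\le x\le 2-(y-\tau_i)^2\}$, $b_i=\{(x,y):\,2+|y-\tau_i|\le x\le 12\}$. These are convex sets spanned between the two lines; $a_i$ touches $m_i$ at $(0,\tau_i)$ and $m_i$ touches $b_i$ at $(2,\tau_i)$, while for $i\ne j$ one has $-|y-\tau_i|<(y-\tau_j)^2$ and $2-(y-\tau_i)^2<2+|y-\tau_j|$ at every height $y$ (each left-hand side is $\le$ the separating constant $\le$ the right-hand side, and the two equalities never occur at the same $y$), so $a_i<m_j$, $m_i<b_j$, and $a_i<b_j$ hold exactly as in $P_0$, and the sets within each level pairwise intersect. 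This also shows why your ``concavity obstruction'' cannot bite for this poset: each $m_i$ has only one crossing (with $b_i$) that pushes its concave right boundary far to the right, and a convex set is free to bulge toward each required neighbour at its own private height, so no comparability is straddled. The paper instead engineers a global combinatorial obstruction with a $12$-element poset: three incomparable pairs pin three points $x,y,z$ of the strip; six pairwise incomparable middle elements are each comparable to both members of every pair, so the subset of $\{x,y,z\}$ lying left of each middle element's base segment is dictated by the order, and these six subsets are distinct, nonempty and proper; but a segment in the strip can realize at most $7$ of the $8$ subsets (for the point $p$ of middle height, either $\{p\}$ or its complement is impossible), leaving only $5$ admissible types for $6$ elements --- a contradiction that in fact rules out even quasi-convex representations. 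You would need to redesign your candidate along such lines (and also verify its dimension is $3$, which you left unchecked) before part (ii) can be considered proved.
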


%% ATTACHED TO ONE LINE %%
Rok and Walczak~\cite{RW14} have looked at intersection graphs of
connected objects that are attached to a horizontal line and 
contained in the upper halfplane defined by this line. They show that 
there is a function $f$ such that  $\chi(G) \leq
f(\omega(G))$ for all $G$ admitting such a representation.
However, there is no effective on-line coloring algorithm 
for graphs in this class, even if we restrict the
objects to be segments.

\begin{proposition}\label{prop:attached-impossible}
  Any on-line algorithm can be forced to use arbitrarily many colors
  on a family of segments attached to a line, even if the family
  contains no three pairwise intersecting segments ($\omega = 2$).
\end{proposition}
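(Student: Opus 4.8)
The plan is to prove the proposition by reduction to the classical fact --- the ``popular exercise'' mentioned in the introduction --- that an adaptive adversary can force any on-line algorithm to use at least $k$ colors on a triangle-free graph, and then to realize the triangle-free graphs built by such an adversary as intersection graphs of segments attached to a line. Since in any intersection representation adjacency equals crossing, realizing a triangle-free graph automatically yields a family with no three pairwise crossing segments, i.e.\ $\omega=2$; so the only real work is the geometric realization.

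First I would fix a convenient adversary. I would use the strategy that forces an independent set of $k$ \emph{rainbow} vertices (an independent set whose colors are pairwise distinct), which already witnesses $k$ colors. This is built by induction: to force $k+1$ rainbow vertices, run the strategy twice in two disjoint, mutually non-adjacent regions to obtain rainbow independent sets $A$ and $B$ of size $k$, with color palettes $C_A$ and $C_B$. If $C_A\neq C_B$, a witness $b^*\in B$ of a color in $C_B\setminus C_A$ yields the rainbow independent set $A\cup\{b^*\}$ of size $k+1$. If $C_A=C_B$, introduce a vertex $w$ adjacent to exactly the vertices of $A$; as $A$ is rainbow and independent, $w$ is forced to a color outside $C_A$, and since $w$ is non-adjacent to $B$, the set $B\cup\{w\}$ is rainbow of size $k+1$. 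The only vertices with more than one earlier neighbor are the combining vertices $w$, and each is adjacent to an independent set, so the graph stays triangle-free.

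Second, I would realize this process on-line with grounded segments, maintaining a geometric normal form. Each recursively built block lives in its own vertical slot, with the slots placed left to right in order of creation, and each block exposes its still-active (rainbow) vertices as tall \emph{spikes} reaching a designated height, all lower-level material being kept strictly lower. A combining vertex $w$ is then realized as a single long, nearly horizontal grounded segment --- a \emph{transversal} --- crossing exactly the spikes of the block $A$ it must be adjacent to, and nothing else; successive transversals are placed at slightly different heights so that they do not cross each other. Two spikes never cross, and the spikes crossed by one transversal are pairwise non-crossing, so the neighbourhood of every vertex is independent and no triangle is ever created, confirming $\omega=2$. The key point is that such long transversals, sweeping across and crossing an entire independent family, are available precisely because the segments are attached to only one line and their upper endpoints are free; between two lines the representation would be a cocomparability graph, which by the results quoted above is on-line boundable, so this single-line freedom is exactly what powers the unbounded forcing.

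The main obstacle is the on-line geometric bookkeeping needed to guarantee that every combining transversal meets exactly the intended spikes and avoids all other segments. A straight segment can only cross a transversal-consecutive range of a family of spikes, so the slots must be laid out so that every set of blocks that the strategy decides to combine is contiguous in the $x$-ordering, and the designated spike-heights must be stratified by recursion level so that a transversal tuned to one block misses the spikes belonging to other blocks. Making these invariants survive the \emph{adaptive} reveal --- where the colors chosen by the algorithm, and hence which vertices must be combined, are decided on the fly --- is the delicate part; conceptually, however, nothing beyond careful placement of feet, slopes and heights is required, and the resulting family of segments attached to a line has $\omega=2$ while forcing the algorithm to use arbitrarily many colors.
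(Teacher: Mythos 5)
Your combinatorial skeleton (forcing rainbow independent sets by running the strategy twice and either exploiting differing palettes or adding a combining vertex adjacent to one block) is exactly the strategy underlying the paper's proof, and your observation that the entire difficulty is the geometric realization is correct. But that realization is where your proposal has a genuine gap: the scheme you sketch --- blocks in left-to-right slots, active vertices exposed as tall spikes at level-dependent heights, combining vertices realized as nearly horizontal straight transversals --- does not survive the recursion. The obstruction is that a straight segment attached to the line has monotone height along its length. After one combining step the new active set is $B\cup\{w\}$, where $w$ is a transversal whose body is interleaved with the (now inactive) spikes of $A$; the only part of $w$ that a later segment can reach without touching $A$ is the low ``tail'' near $w$'s foot. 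One level higher the same happens inside the other block, so a subsequent transversal is forced to cross a low tail, then pass \emph{above} an inactive spike forest it must avoid (e.g.\ a spike $c$ already crossed by an earlier combining transversal $t'$: crossing both $c$ and $t'$ would create a triangle), and then come back \emph{down} to cross an active spike or another low tail. That up--down--up pattern is impossible for a height-monotone segment, and stratifying spike heights by recursion level does not help, because the spike to be crossed and the spike to be avoided sit at the same recursion level in adjacent slots. A second, independent gap: in your palettes-differ case the new rainbow set is $A\cup\{b^*\}$ with $b^*\in B$ chosen according to the algorithm's colors; $b^*$ is shadowed by its same-height siblings in $B$'s slot, so a straight grounded segment cannot cross $b^*$ while avoiding the rest of $B$.

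The paper avoids all of this with a different invariant and a different placement. The invariant is not about heights but about a \emph{vertical witness line} $v$: all segments pierced by $v$ are pairwise disjoint, are attached to the right of $v$, and carry at least $k$ colors. The second recursive block is then built \emph{adaptively}, inside a tiny rectangle $R$ whose left side lies on the first block's witness line $v_1$, placed under the segments $V_1$ pierced by $v_1$ and disjoint from the first family. This nesting makes the second block's witness line $v_2$ automatically pierce $V_1\cup V_2$, so the ``palettes differ'' case needs no selection of a witness element at all ($v_2$ itself witnesses $\geq k$ colors on a pairwise disjoint family), and in the ``palettes equal'' case the combining segment $d$ only has to solve a local problem: its foot lies between $v_1$ and $v_2$, it crosses all of $V_2$ inside the tiny box, stays below $V_1$, and pokes across $v_1$, so that $v_1$ now pierces the pairwise disjoint set $V_1\cup\{d\}$ showing $k$ colors. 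If you replace your height/contiguity bookkeeping by this witness-line invariant and let the adversary place the second block only after the first is finished (which an adaptive adversary may do), your argument goes through; as written, the recursive step is not established.
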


Recall that it may make a difference for an on-line
coloring algorithm whether the input is an abstract
cocomparability graph, or the corresponding poset, or a geometric
representation.  Kierstead, Penrice and Trotter~\cite{KPT94} gave an
on-line coloring algorithm for cocomparability graphs using a number
of colors that is superexponential in $\omega$.  Bosek and
Krawczyk~\cite{BK10} introduced an on-line coloring algorithm for
posets using $\omega^{O(\log \omega)}$ colors where $\omega$ is the
width of the poset.  We show that having a poset represented
by $y$-monotone curves between two lines does not help on-line
algorithms. Indeed, such a representation can
be constructed on-line if the poset is given.

\begin{theorem}\label{thm:online-curves}
  There is an on-line algorithm that for any poset draws $y$-monotone
  curves spanned between two lines such that $x < y$ in the poset if and
  only if the curves $x$ and $y$ are disjoint and $x$ lies left of
  $y$.  That means, for every element of the poset when it is created
  a curve is drawn in such a way that throughout the set of already
  drawn curves forms a representation of the current poset.
\end{theorem}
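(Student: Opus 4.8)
The plan is to reduce the theorem to a single insertion step and proceed by induction on the number of created elements. Parameterize each $y$-monotone curve as a continuous function $x = f(y)$ on the strip $[0,1]$, so that two curves are disjoint with $x$ left of $y$ exactly when $f_x(y) < f_y(y)$ for all $y$, and they intersect exactly when $f_x - f_y$ changes sign. In these terms the required invariant is that, after each step, disjointness-with-correct-order holds precisely between comparable elements and crossing holds precisely between incomparable ones. Assuming inductively that the already-drawn curves form a valid representation of the current poset $P$, I would handle a newly created element $z$ by drawing one new curve, without moving any existing curve, so that the extended family represents $P + z$. Write $D = \{d : d < z\}$, $U = \{u : z < u\}$, and let $I$ be the set of elements incomparable to $z$.

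The comparable constraints are the easy part and are governed entirely by transitivity. Since $d < z < u$ forces $d < u$, every element of $D$ lies below every element of $U$ in $P$, so in the current valid representation each $D$-curve is entirely left of each $U$-curve. Consequently the lower envelope $R(y) = \min_{u \in U} f_u(y)$ and the upper envelope $L(y) = \max_{d \in D} f_d(y)$ (with the conventions $L \equiv -\infty$ if $D = \emptyset$ and $R \equiv +\infty$ if $U = \emptyset$) satisfy $L(y) < R(y)$ for all $y$, so there is an open vertical channel between the down-set and the up-set. First I would place the two endpoints of $z$ in the corresponding gaps on the bottom and top lines and route $z$ monotonically inside this channel; this alone guarantees that $z$ is disjoint from, and correctly ordered with, every comparable element.

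The incomparable constraints are the crux: $z$ must cross every $w \in I$. A monotone curve confined to the channel can be routed to cross $w$ if and only if $w$ enters the open channel at some height, since by continuity an incomparable curve that never enters the channel is trapped entirely left of $L$ or entirely right of $R$, and then $z$ cannot meet it without crossing a comparable curve (which is forbidden). The hard part is precisely that such ``hidden'' incomparable curves genuinely can occur in an arbitrary valid representation — for instance $w$ may cross two incomparable elements $a, b \in D$ yet remain in the left wedge of their crossing, hence always left of $L = \max(f_a, f_b)$. So the naive ``route anywhere in the channel'' argument fails, and the real content of the proof is to maintain an additional normal form in which no incomparable curve is ever hidden from any future $z$.

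I would therefore maintain a canonical representation built from an on-line-updated linear extension: the bottom-line order is kept equal to a linear extension $\beta$ of $P$ (extended by inserting $z$ into the appropriate gap of $\beta$), and each curve is drawn so that, reading upward, it first realizes all of its crossings with currently incomparable curves and then runs monotonically to its slot in a top order, with each curve pushed as far right as its up-set permits. The plan is to show, again using transitivity (every $d \in D$ lies below every $u \in U$, so the channel stays open throughout), that this normal form is preserved by the insertion described above and that it forces every $w \in I$ to poke into the channel of $z$ at some height, so that the routing of the previous paragraph succeeds in crossing each incomparable curve exactly where needed. Verifying that this invariant is maintained — that the greedy, rightward-justified routing never hides an incomparable curve from a later element — is the technical heart of the argument; once it is established, the induction goes through and produces the desired on-line drawing, in accordance with the off-line fact that cocomparability graphs are exactly the function graphs.
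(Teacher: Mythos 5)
There is a genuine gap, and you located it yourself: everything hinges on maintaining a strengthened inductive invariant that prevents incomparable curves from being ``hidden'' outside the channel of a future element, and your proposal only posits such an invariant without establishing it. Your diagnosis of why the naive induction fails is exactly right (a curve $w$ incomparable to both $a,b\in D$ can stay in the left wedge of their crossing, hence left of $\max(f_a,f_b)$ forever), but the proposed fix --- a ``normal form'' in which the bottom-line order follows an on-line linear extension, each curve ``first realizes all of its crossings'' and is ``pushed as far right as its up-set permits'' --- is not a well-defined construction. Realizing ``all crossings first'' is a pairwise condition that must be scheduled consistently across all curves simultaneously, and it is not clear this can be done, nor that it survives an insertion, nor that it actually forces every incomparable curve into every relevant future channel. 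Since you explicitly defer this verification as ``the technical heart of the argument,'' the proof is a program rather than a proof.

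For comparison, the paper resolves this with a different and much stronger invariant, which is the one idea your sketch is missing: maintain a set $\calL$ of horizontal lines such that \emph{every} linear extension of the current poset is witnessed by some line in $\calL$, meaning the curves cross that line from left to right exactly in the order given by the extension. This invariant kills both halves of your difficulty at once. First, no curve can be hidden: for $w$ incomparable to a new element $z$ with down-set $D$ and up-set $U$, there is a linear extension placing all of $D$ before $w$ and $w$ before all of $U$ (this uses exactly the transitivity facts you noted), and its witnessing line exhibits a height at which $w$ lies strictly inside the channel. Second, the invariant is maintainable: the new curve is routed through an $\varepsilon$-tube around each line of $\calL$, sweeping from near the right boundary of the channel (below the line) to near the left boundary (above it), which manufactures a witnessing line for every linear extension of the extended poset; the crossings with all incomparable curves then follow immediately, since for each such $w$ there are witnessing lines putting $z$ left of $w$ and right of $w$, and $y$-monotone curves that swap sides must intersect. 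Your single-linear-extension normal form, even if it could be made precise, would have to re-derive something of this strength; the all-extensions invariant is precisely the formulation that makes the induction close.
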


Theorem~\ref{thm:main-convex} and
Proposition~\ref{prop:convex-vs-dim-and-height} are proven in
Section~\ref{sec:convex}. Actually we define the class of quasi-convex posets and
show the $O(\omega^3)$ bound for this class. Since every convex poset is
quasi-convex this implies Theorem~\ref{thm:main-convex}. The section
is concluded with a proposition showing that the class of quasi-convex
posets is a proper superclass of convex posets. In
Section~\ref{sec:connected-btw-two-lines} we discuss general connected
sets between two lines. In this context we prove
Theorem~\ref{thm:online-curves}.
We conclude the paper with a proof of
Proposition~\ref{prop:attached-impossible} in
Section~\ref{sec:connected-btw-two-lines} and a list of
four open problems related to these topics that we would very much
like to see answered.

\section{Quasi-Convex Sets Between Two Lines}\label{sec:convex}

A connected set $v$ spanned between two parallel lines 
is \emph{quasi-convex} if it contains a segment $s_v$ that has
its endpoints on the two lines. When working with a family of
quasi-convex sets it is convenient to fix such a segment $s_v$
for each $v$ and call it the \emph{base segment} of the set $v$.
Clearly, every convex set spanned
between two lines is also quasi-convex.

Below we show that there is an on-line algorithm coloring a family of
quasi-convex sets between two parallel lines with $O(\omega^3)$
colors, when $\omega$ is the clique number of the family.  This
implies Theorem~\ref{thm:main-convex}

\begin{proof}[Proof of Theorem~\ref{thm:main-convex}]
  We describe an on-line coloring algorithm using at most
  $\binom{\omega+1}{2}\cdot 24\omega$ colors on quasi-convex sets spanned
  between two parallel lines with clique number at most $\omega$.
  The algorithm colors incoming sets with triples
  $(\alpha,\beta,\gamma)$ of positive integers with $\alpha+\beta\leq
  \omega+1$ and $\gamma\leq 24\omega$ in such a way that intersecting
  sets receive different triples.

Let $\ell^1,\ell^2$ be the two horizontal lines such that the quasi-convex sets
of the input are spanned between $\ell^1$ and $\ell^2$.
With a quasi-convex set $v$ we consider a fixed base segment $s_v$ 
and the points ($x$-coordinates) $v^i = s_v \cap \ell^i $ for $i=1,2$.

A sequence $(v_1,\ldots,v_k)$ of already presented quasi-convex sets
is \emph{$i$-in\-crea\-sing} for $i = 1,2$ if we have $v_1^i \leq v_2^i\leq\ldots \leq v_k^i$.  
The reverse of an $i$-increasing sequence is
called \emph{$i$-decreasing} for $i=1,2$.  Let $\alpha_v$ be the size
of a maximum sequence $S_\alpha(v)$ of already presented sets that is
$1$-increasing and $2$-decreasing and starts with $v$.  Let $\beta_v$
be the size of a maximum sequence $S_\beta(v)$ of already presented
sets that is $1$-decreasing and $2$-increasing and starts with $v$.
 
The algorithm is going to color $v$ with a triple $(\alpha_v, \beta_v,
\gamma_v)$ where $\alpha_v$ and $\beta_v$ are defined as above. 
The definition of  $\alpha_v$ and $\beta_v$ is as in Schmerl's 
on-line algorithm for chain partitions of 2-dimensional orders or
equivalently on-line coloring of permutation graphs. Indeed, if 
the input consists of a set of segments, then any two segments
with the same $\alpha$- and $\beta$-values are disjoint. 

For a
fixed pair $(\alpha,\beta)$ consider the set $X =
X(\alpha,\beta)$ of all quasi-convex sets $u$ presented so far 
that have been colored colored with $(\alpha,\beta,*)$, where $*$ an
arbitrary value of the third coordinate. Since $S_\alpha(v)\cup
S_\beta(v)$ is a collection of sets with pairwise intersecting 
base segments we can conclude that $\alpha_v+\beta_v= |S_\alpha(v)|+
|S_\beta(v)| = 1 + |S_\alpha(v)\cup S_\beta(v)| \leq 1+ \omega$.

To determine $\gamma_v$ the algorithm uses First-Fit on the set
$X(\alpha,\beta)$.  Bosek et al.~\cite{BKS10} have shown 
that  First-Fit is efficient on cocomparability graphs with no induced
$K_{t,t}$. The best bound is due to Dujmovi\'c, Joret and
Wood~\cite{DJW12}: First-Fit uses at most $8(2t-3)\omega$ colors
on cocomparability graphs with no induced $K_{t,t}$.

To make the result applicable we show
that the intersection graph of each class $X(\alpha,\beta)$ is a
cocomparability graph with no induced $K_{3,3}$. As the number of
these sets is at most $\binom{\omega+1}{2}$, this will conclude the
proof.
 
\begin{claim} 
The bases of sets in $X(\alpha,\beta)$ are pairwise disjoint.
\end{claim}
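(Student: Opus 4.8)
The plan is to argue by contraposition, reading off disjointness of the base segments from the geometry. Each base $s_v$ is a chord of the strip joining the point $v^1$ on $\ell^1$ to the point $v^2$ on $\ell^2$, so two such chords $s_u$ and $s_v$ are disjoint exactly when the orders of their endpoints agree on the two lines, i.e.\ when $u^1<v^1$ and $u^2<v^2$, or $u^1>v^1$ and $u^2>v^2$. Hence $s_u$ and $s_v$ meet precisely when the orders disagree or an endpoint is shared, that is, when either $u^1\le v^1$ and $u^2\ge v^2$, or $u^1\ge v^1$ and $u^2\le v^2$ (with at least one inequality strict unless the chords coincide). I would therefore fix two distinct sets $u,v\in X(\alpha,\beta)$, assume $s_u\cap s_v\neq\emptyset$, and derive a contradiction with $\alpha_u=\alpha_v$ and $\beta_u=\beta_v$.

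The engine of the proof is a prepending inequality. Assume without loss of generality that $u$ is created before $v$. If $v^1\le u^1$ and $v^2\ge u^2$, then placing $v$ in front of $S_\alpha(u)$ produces a sequence that is still $1$-increasing and $2$-decreasing and starts with $v$; since every element of $S_\alpha(u)$ was already present when $u$, and hence $v$, arrived, this is an admissible witness for $\alpha_v$, giving $\alpha_v\ge 1+|S_\alpha(u)|=1+\alpha_u$. Symmetrically, if $v^1\ge u^1$ and $v^2\le u^2$, prepending $v$ to $S_\beta(u)$ yields $\beta_v\ge 1+\beta_u$.

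Now I would split according to how $s_u$ and $s_v$ meet. In the case $u^1\ge v^1$ and $u^2\le v^2$ we have $v^1\le u^1$ and $v^2\ge u^2$, so the first inequality gives $\alpha_v\ge 1+\alpha_u>\alpha_u$, contradicting $\alpha_u=\alpha_v$. In the case $u^1\le v^1$ and $u^2\ge v^2$ we have $v^1\ge u^1$ and $v^2\le u^2$, so the second inequality gives $\beta_v\ge 1+\beta_u>\beta_u$, contradicting $\beta_u=\beta_v$. In either case we reach a contradiction, so no two bases in $X(\alpha,\beta)$ can meet, which is the claim.

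The delicate point, and the step I expect to need the most care, is the treatment of ties: when one coordinate agrees, say $u^1=v^1$, the chords share an endpoint and so must be counted as intersecting, and I must check that the strict inequality forced on the other line still routes the pair into exactly one of the two cases above, so that the weak inequalities in the definitions of $S_\alpha$ and $S_\beta$ still yield a genuine increase in $\alpha_v$ or $\beta_v$. The second thing to keep straight is the use of the creation order: it is precisely what guarantees that $S_\alpha(u)$ (respectively $S_\beta(u)$) consists of sets already present when $v$ is colored, which is what makes the prepending legitimate.
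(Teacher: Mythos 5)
Your proof is correct and takes essentially the same route as the paper's: both rest on the prepending argument, in which the later-presented set is placed in front of a witness sequence $S_\alpha$ or $S_\beta$ of the earlier-presented one to force $\alpha_v \geq \alpha_u + 1$ or $\beta_v \geq \beta_u + 1$, contradicting membership in $X(\alpha,\beta)$. The only difference is organizational — you fix the creation order without loss of generality and split on the geometric configuration of the bases, while the paper fixes the configuration and splits on the creation order — and your observation that the weak inequalities in the definition of $i$-increasing sequences make ties harmless is exactly what the paper's argument also relies on.
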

\begin{claimproof} 
Consider any two sets $u_1,u_2 \in X$ with the
endpoints $u_j^i \in u_j \cap \ell^i$ for $i=1,2$ and $j=1,2$ of their
bases.  It suffices to show that we have $u_1^i < u_2^i$ for $i=1,2$
or $u_1^i > u_2^i$ for $i=1,2$.
  
Assume that $u_1^1 \leq u_2^1$ and $u_1^2 \geq u_2^2$ and that $u_1$ was
presented before $u_2$.  Since $u_1 \in X(\alpha,\beta)$ it is part of a
$1$-decreasing and $2$-increasing sequence
$(u_1,v_2,\ldots,v_{\beta})$. The sequence
$(u_2,u_1,v_2\ldots,v_{\beta})$ is a longer
$1$-decreasing and $2$-increasing sequence starting with $u_2$. 
This contradicts the fact that $u_2 \in X(\alpha,\beta)$.
  
A similar argument applies when $u_2$ was presented before $u_1$.
In this case we compare the 
$1$-increasing and $2$-decreasing sequences
$(u_2,v_2,\ldots,v_\alpha)$ 
and  $(u_1,u_2,v_2,\ldots,v_\alpha)$ to arrive at a contradiction.
\end{claimproof}
 
\begin{claim} The intersection graph of $X(\alpha,\beta)$ contains no induced
$K_{3,3}$.
\end{claim}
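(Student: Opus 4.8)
The plan is to exploit the left-to-right order on base segments coming from the previous claim, and to reduce the absence of an induced $K_{3,3}$ to one clean structural fact: every set in $X=X(\alpha,\beta)$ meets a \emph{contiguous} block of the base segments. First I would record the order. By the previous claim (indeed by its proof) any two sets $u_1,u_2\in X$ satisfy $u_1^i<u_2^i$ for $i=1,2$ or $u_1^i>u_2^i$ for $i=1,2$; hence their bases are pairwise non-crossing arcs across the strip $\mathbf{S}$ and are linearly ordered from left to right. Write $w_1,\dots,w_m$ for the elements of $X$ in this order, so $s_{w_k}$ separates $\mathbf{S}$ into an open left region containing $s_{w_1},\dots,s_{w_{k-1}}$ and an open right region containing $s_{w_{k+1}},\dots,s_{w_m}$. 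For $w\in X$ put $I(w)=\set{k : w\cap s_{w_k}\neq\emptyset}$. I claim $I(w)$ is an interval of consecutive integers containing the index of $w$: since $w\supseteq s_w$ its own index lies in $I(w)$, and if $p<q<r$ with $p,r\in I(w)$, then $w$ contains a point of $s_{w_p}$ lying in the left region of $s_{w_q}$ and a point of $s_{w_r}$ lying in the right region, so the connected set $w$ must cross the separating arc $s_{w_q}$, giving $q\in I(w)$. Note finally that $k\in I(w)$ forces $w$ to meet $s_{w_k}\subseteq w_k$, so $w$ and $w_k$ intersect; in particular disjoint $w_i,w_j$ with $i<j$ satisfy $j\notin I(w_i)$.

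Now I would assume for contradiction that $X$ carries an induced $K_{3,3}$ with parts $\set{a_1,a_2,a_3}$ and $\set{b_1,b_2,b_3}$, each listed in the left-to-right order on bases; within a part the sets are pairwise disjoint, while every $a$ meets every $b$. Let $a_2,b_2$ be the two middle elements, and after possibly swapping the two parts assume the index of $a_2$ is smaller than that of $b_2$. Since $b_1$ and $b_3$ are disjoint from $b_2\supseteq s_{b_2}$ and lie on opposite sides of $s_{b_2}$, while $a_1$ meets both $b_1$ and $b_3$, the connected set $a_1$ must cross $s_{b_2}$, so the index of $b_2$ lies in $I(a_1)$. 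The index of $a_1$ also lies in $I(a_1)$, and in the left-to-right order we have (index of $a_1$) $<$ (index of $a_2$) $<$ (index of $b_2$). As $I(a_1)$ is an interval, the index of $a_2$ lies in $I(a_1)$ too, whence $a_1$ meets $s_{a_2}\subseteq a_2$ --- contradicting that $a_1,a_2$ are disjoint. This is the desired contradiction.

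The step I expect to be the crux is the interval property of $I(w)$ together with its topological justification: the essential point is that each base segment $s_{w_q}$ is a simple arc joining $\ell^1$ and $\ell^2$ and therefore cuts $\mathbf{S}$ into exactly two regions, so that any connected set touching bases on both sides is forced to cross $s_{w_q}$. Once this contiguity is available, the remaining work is only the bookkeeping of the linear order and the harmless swap of the two color classes, and the combinatorial contradiction is immediate; in particular this is exactly the input needed to apply the Dujmovi\'c--Joret--Wood bound with $t=3$.
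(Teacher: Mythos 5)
Your proof is correct, and while it rests on the same geometric foundation as the paper's --- a base segment spans the strip, so a connected set disjoint from it lies entirely in one component, and a connected set with points on both sides must cross it --- the combinatorial organization is genuinely different. The paper argues by pigeonhole: among the six bases, at least two of the three leftmost belong to one part, say $u_1,u_2\in U$, and then two of the three rightmost belong to the other, say $v_1,v_2\in V$, giving the base order $u_1,u_2,v_1,v_2$; disjointness within parts forces $u_1$ entirely left of $s_{u_2}$ and $v_2$ entirely right of $s_{v_1}$, so $u_1$ and $v_2$ are disjoint, i.e.\ a cross edge of the alleged $K_{3,3}$ is missing. You instead isolate a contiguity lemma --- the set of bases met by any $w\in X(\alpha,\beta)$ is an interval in the left-to-right order --- and contradict independence \emph{within} a part: $a_1$ meets $b_1$ and $b_3$, which lie on opposite sides of $s_{b_2}$, hence crosses $s_{b_2}$, and the interval property then forces $a_1$ to cross $s_{a_2}$ as well. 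The paper's route is shorter (no lemma, one disjoint cross pair ends the proof), but your interval lemma is a cleaner reusable abstraction: it is essentially the structural fact one would use to establish the remark following the proof, that the intersection graph of $X(\alpha,\beta)$ has pathwidth at most $2\omega-1$ (sets reaching a common gap from the left pairwise intersect, and similarly from the right), which improves the final color bound from $\binom{\omega+1}{2}\cdot 24\omega$ to $\binom{\omega+1}{2}\cdot 16\omega$.
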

\begin{claimproof} 
Let $U$ and $V$ be any two disjoint triples of sets in $X$.  We
shall show that if $U$ and $V$ are independent, then there is a set
in $U$ which is disjoint from a set in $V$, i.e., that the
intersection graph of these six sets is not an induced $K_{3,3}$
with bipartition classes $U, V$.
  
By the previous claim the bases of these six sets in $U \cup V$ are
disjoint and hence are naturally ordered from left to right within the
strip.  Without loss of generality amongst the leftmost three bases at
least two belong to sets in $U$ and thus amongst the rightmost 
three bases at least two belong to sets in $V$.  In particular, there are
four sets $u_1, u_2 \in U$, $v_1,v_2 \in V$ whose left to right order
of bases is $u_1,u_2,v_1,v_2$.
  
By assumption $u_1,u_2$ and $v_1,v_2$ are non-intersecting.  Since the
base of each set is contained in the corresponding set
(quasi-convexity) we know that $u_1$ lies completely to the left of
the base of $u_2$ and $v_2$ lies completely to the right of the base
of $v_1$. Together with the order of the bases of $u_2$ and $v_1$ this
makes $u_1$ and $v_2$ disjoint.
\end{claimproof}

\end{proof}

It is possible to decrease the number of colors used by the algorithm
from $\binom{\omega+1}{2}\cdot24\omega$ to $\binom{\omega+1}{2}\cdot
16\omega$ by showing that the pathwidth of the intersection graph of
$X(\alpha,\beta)$ is at most $2\omega-1$ and applying another
result from~\cite{DJW12}: First-Fit on cocomparability graph of
pathwidth at most $t$ uses at most $8(t+1)$ colors.

\begin{proof}[Proof of Proposition~\ref{prop:convex-vs-dim-and-height}]
  Let $P$ be any poset of height $2$, and let $X$ and $Y$ be the sets
  of minimal and maximal elements in $P$, respectively.  We represent
  the elements in $Y$ as pairwise intersecting segments so that every
  segment appears on the left envelope, that is, on every segment $y
  \in Y$ there is a point $r_y$ such that the horizontal ray emanating
  from $r_y$ to the left has no further intersection with segments
  from $Y$.  Choose $p \in \ell^1$ and $q \in \ell^2$ to the left of
  all segments for $Y$ and define for each $x \in X$ the convex set
  $C_x$ as the convex hull of $p,q$ and the set of all $r_y$ for which
  $y$ and $x$ are incomparable in $P$.  It is easy to check that in
  the resulting representation two sets intersect if and only if the
  corresponding elements in $P$ are incomparable. See
  Figure~\ref{fig:height-2-convex} for an illustration.
 
 \begin{figure}[htb] \centering
  \includegraphics{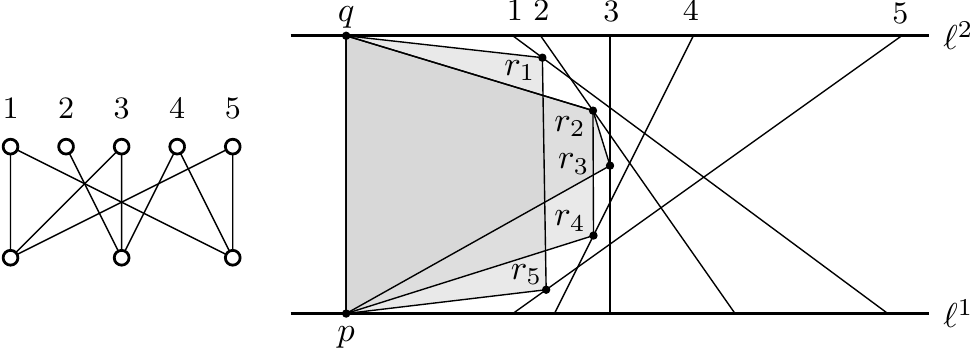}
  \caption{A poset of height $2$ and its representation with convex
sets spanned between two lines.}
  \label{fig:height-2-convex}
 \end{figure}

\begin{figure}[htb] \centering
  \includegraphics{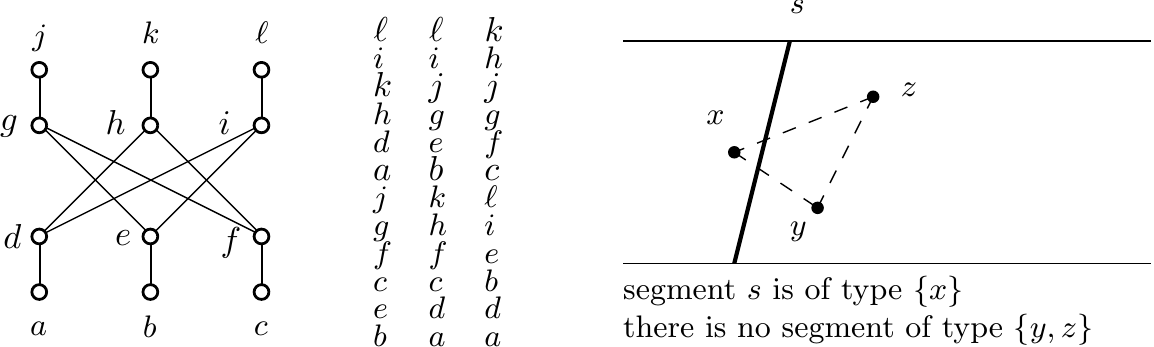}
  \caption{A height $3$ and $3$-dimensional poset that is not
quasi-convex. Provided with its $3$ linear extensions witnessing the
dimension.}
  \label{fig:height-3}
\medskip
\end{figure} 
 
We claim that the poset $Q$ depicted in Figure~\ref{fig:height-3} is
not quasi-convex. Suppose that there is a quasi-convex realization of
$Q$.  Fix three points within the strip $x\in a\cap j$, $y\in b\cap k$, and $z\in c\cap
\ell$.  The \emph{type} of a segment $s$ spanned in the strip and
avoiding $x$, $y$ and $z$ is the subset of $\set{x,y,z}$ consisting of
the points that are to the left of $s$.  How many different types of
segments can exist for given $x$, $y$ and $z$?  We claim that among
$8$ possible subsets only $7$ are realizable.  Indeed, consider the
point $p\in\set{x,y,z}$ with the middle value with respect to the
vertical axis.  Then either $\set{p}$ or $\set{x,y,z}\setminus\set{p}$
is not realizable (see Figure~\ref{fig:height-3}).  A collection of
quasi-convex sets representing the elements $d,e,f,g,h,i$ of $Q$ must
have base segments of pairwise distinct types. Moreover the types
$\emptyset$ and $\set{x,y,z}$ do not occur.  This leaves $5$ possible
types for $6$ elements, contradiction.
\end{proof}

\begin{proposition}\label{prop:not-convex}
 There is a quasi-convex poset that is not convex.
\end{proposition}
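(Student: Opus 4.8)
The plan is to exhibit an explicit poset $P$ together with a quasi-convex realization, and then to prove that $P$ admits no convex realization. The easy half is the construction: I would draw a family of connected sets between $\ell^1$ and $\ell^2$, each equipped with a base segment, and simply read off that the resulting \emph{left-of} poset is $P$; the essential feature of the drawing should be one set that uses a non-convex ``finger'', i.e.\ whose cross-section at some height consists of two disjoint intervals. To set up the hard half, I would first record the structural constraints that \emph{convexity} imposes. For a convex set $v$ spanned between the two lines, at every height $t$ the cross-section is a single interval $[L_v(t),R_v(t)]$, with $L_v$ convex and $R_v$ concave and $L_v\le R_v$ throughout; equivalently, any two disjoint convex sets are separated by a line crossing the strip. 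Consequently each comparability of $P$ translates into an affine inequality: a set lying to the left of $v$ at height $t$ forces a \emph{lower} bound $L_v(t)\ge\alpha$, and a set lying to the right forces an \emph{upper} bound $R_v(t)\le\beta$. Quasi-convex sets escape exactly these restrictions, since their cross-sections may split into several intervals and their boundary functions need not be monotone.

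The key observation driving the impossibility proof is that these constraints cannot be defeated one set at a time: for any single element, a convex left-boundary can be pushed arbitrarily far right and a concave right-boundary arbitrarily far left, so the induced system is always individually feasible. Hence the obstruction must be \emph{global}, and the plan is to engineer $P$ so that its comparabilities form a \emph{cyclic} system of such bounds that no simultaneous assignment of convex/concave boundary functions can satisfy. Concretely, I would arrange elements at three interleaved heights $t_1<t_2<t_3$ so that, propagating the lower bounds on the (convex) left-boundaries and the upper bounds on the (concave) right-boundaries around the cycle, some convex boundary function is forced to exceed the chord through its own endpoint values at the middle height $t_2$, contradicting convexity; meanwhile the same incidence pattern is realized by the quasi-convex drawing precisely because the offending set can route a finger around the obstruction rather than respecting a single interval.

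The main obstacle is this global consistency step: unlike the type-counting argument used for $Q$ in Proposition~\ref{prop:convex-vs-dim-and-height}, which rules out a quasi-convex realization by a local count of base-segment types, here I must track how the convexity of one boundary interacts with the concavity of another across several elements, and verify that the cycle I build is genuinely infeasible for convex sets yet feasible for quasi-convex ones. I expect the bulk of the work to lie in choosing the heights and horizontal positions so that the chord inequality is violated with strict slack (making the argument robust to the freedom in a putative convex realization), while checking that the explicit quasi-convex drawing produces exactly the intended adjacencies is a routine, if careful, verification.
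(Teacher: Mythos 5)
Your proposal correctly anticipates the endgame of the argument: a poset whose quasi-convex realization uses a ``finger'', while convexity would force a straight chord between two points and thereby create a forbidden intersection. This is indeed how the paper concludes (a convex set $g$ containing points $p\in x^*$ and $q\in z^*$ would contain the segment $\overline{pq}$, which forces an intersection contradicting $g<y_2$). But there is a genuine gap at exactly the place you flag as ``the main obstacle'', and it is not a matter of routine verification: your plan has no mechanism for converting comparabilities of an abstract poset into geometric constraints tied to specific heights and positions. A comparability $u<v$ only says that the set for $u$ lies in the left component of the strip minus the set for $v$; an incomparability only says the two sets meet \emph{somewhere}. Neither pins down a height $t$, so the affine inequalities $L_v(t)\ge\alpha$, $R_v(t)\le\beta$ at interleaved heights $t_1<t_2<t_3$ that your cyclic-infeasibility argument needs cannot be read off the poset: a putative convex realization is free to choose all heights, slopes and positions, and any fixed system of height-indexed constraints dissolves under that freedom.

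The paper closes this gap with a rigidity construction, and this is the key missing idea. It starts from a fixed segment representation $\calR$ of a $6$-element poset $Q$ and, for every cell $w$ of the arrangement $\calR$, adds to the poset two incomparable elements $w_1,w_2$ (with $w_2$ above the downset $D_w$ and $w_1$ below its complement). It then proves that \emph{every} quasi-convex representation of the extended poset $\bar{Q}$ induces, via its base segments, an arrangement combinatorially equivalent to $\calR$ (up to reflection): the cell-elements force a cell for each downset $D_w$, a counting argument shows the resulting map on cells is a bijection, and an analysis of the dual graph shows the embedding is determined. Only after this is it meaningful to speak of ``the cells $x^*,y^*,z^*$'' in an arbitrary representation and to prove they are vertically separated, which is what makes the chord argument for the extra element $g$ sound; the quasi-convex realization of the same poset is then exhibited by T-shaped sets, matching your ``finger'' intuition. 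Without such a rigidity lemma (or a substitute for it), your construction cannot get off the ground, so the proposal as it stands is an outline of the right target with the central lemma missing.
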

\begin{proof} Consider the poset $Q$ on the set $E =
\{a,b,c,d,e,f\}$ as shown in Figure~\ref{fig:not-convex-poset}.  Moreover,
consider the representation $\calR$ of $Q$ with segments spanned
between two lines given in Figure~\ref{fig:not-convex-representation}.
Each cell $w$ in $\calR$ naturally corresponds to the downset $D_w$ in
$Q$ (downwards closed subset of $E$) formed by those segments in
$\calR$ that lie completely to the left of $w$.
 
 We shall construct a quasi-convex poset $\bar{Q}$ that has $Q$ as an
induced sub-poset as follows.  For each cell $w$ in $\calR$
corresponding to a downset $D_w \subseteq E$ of~$Q$ there are two
incomparable elements $w_1,w_2$ in $\bar{Q}$, where $w_2$ is above all
elements in $D_w$ and $w_1$ is below all elements in $E \setminus D_w$.
There are no further comparabilities between $w_1,w_2$ and elements of
$\bar{Q}$, except for those implied by transitivity.  We refer again
to Figure~\ref{fig:not-convex} for an illustration.

 \begin{figure}[htb] \centering
\subfigure[\label{fig:not-convex-poset}]{
   \includegraphics{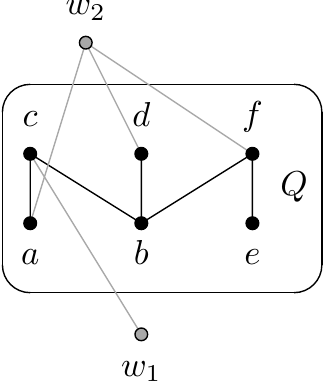} } \hfill
\subfigure[\label{fig:not-convex-representation}]{
   \includegraphics{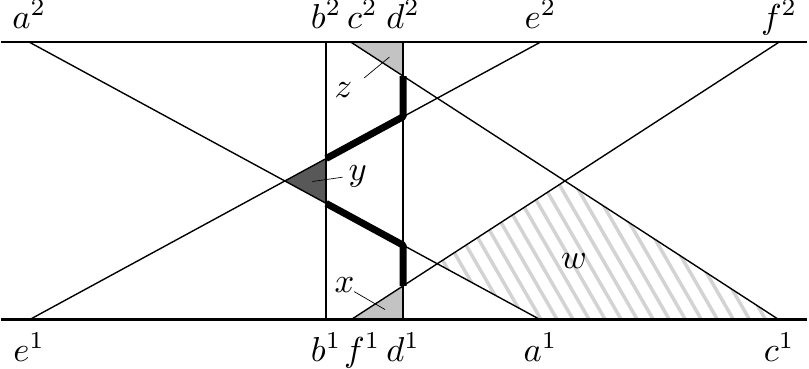} }
  \caption{\subref{fig:not-convex-poset} The $6$-element poset $Q$ and
the elements $x_1,x_2$ of $P$ for the downset $D = \{a,b,d,e,f\}$ in
$Q$. \subref{fig:not-convex-representation} The segment representation
$\calR$ of $Q$ with the cells $w$, $x$, $y$ and $z$ corresponding to
the downsets $D$, $\{b,e,f\}$, $\{a,e\}$ and $\{a,b,c\}$ in $Q$,
respectively.}
  \label{fig:not-convex}
\medskip
 \end{figure}

 Fix any quasi-convex representation $\bar{\calR}$ of $\bar{Q}$.  By
definition each quasi-convex set in $E$ comes with a base segment spanned
between the two lines.  With $\calR'$ we denote the configuration of
the base segments corresponding to elements of $Q$.

\begin{claim} 
  The segment representations $\calR$ and $\calR'$ are equivalent
  in the sense that the segments together with $\ell^1$ and $\ell^2$ induce
  (up to reflection) the same plane graph where vertices are
  attachment points and crossings of segments and edges are pieces of
  segments/lines between consecutive vertices).
 \end{claim}
 \begin{claimproof} Consider any cell $w$ in $\calR$ and the
corresponding downset $D_w \subseteq E$ of $Q$.  By the definition of
$w_1,w_2$ in $\bar{Q}$ (in particular the fact that the corresponding
sets intersect in $\bar{\calR}$), there is a cell $\bar{w}$ in
$\calR'$ that lies to the right of all sets in $D_w$ and to the left of
all sets in $E \setminus D_w$.  Since there can be only one such cell in
$\calR'$, we have an injection $\varphi$ from the cells of $\calR$
into the cells of $\calR'$.
  
Next note that if $s,t$ are two intersecting segments in $\calR$, then
there is a cell in $\calR$ with $s$ to the left and $t$ to the right,
as well as another cell with $s$ to the right and $t$ to the left.
With $\varphi$ we have such cells also in $\calR'$ and hence the
segments for $s$ and $t$ in $\calR'$ intersect as well.  Disjoint
segments in $\calR$ represent a comparability in $Q$, hence, the
corresponding segments in $\calR'$ have to be disjoint as well. It
follows that the number of intersections and, hence, also the number of
cells, is the same in $\calR$ and $\calR'$, proving that $\varphi$ is
a bijection.

To show that $\calR$ and $\calR'$ are equivalent we now consider the
dual graphs. That is we take the cells between the lines as vertices
and make them adjacent if and only if the corresponding downsets
differ in exactly one element.  These dual graphs come with a plane
embedding. All the inner faces of these embeddings correspond to
crossings and are therefore of degree $4$.  Moreover, every $4$-cycle
of these graphs has to be an inner face.  This uniquely determines (up
to reflection) the embeddings of these dual graphs and hence also of
the primal graphs. For the last conclusion we have used that the union
of all segments in $\calR$ and $\calR'$ is connected.
 \end{claimproof}

We extend $\bar{Q}$ by adding an element $g$ below $d$ and
$y_2$, but incomparable to $x_2$ and $z_2$, where $y$, $x$ and $z$ are
the cells in $\calR$ corresponding to downsets $\{a,e\}$, $\{b,e,f\}$,
$\{a,b,c\}$ in $Q$, respectively
(Figure~\ref{fig:not-convex-representation}).
Let $P$ be the poset after adding $g$.
 
\begin{claim} 
$P$ is not convex.
\end{claim}
\begin{claimproof} 
By the previous claim every quasi-convex representation of $P$
induces a segment representation $\calR'$ of $Q$ equivalent to
$\calR$.  We denote the segments in $\calR'$ for elements
$a,b,c,d,e,f$ by $a^*,b^*,c^*,d^*,e^*,f^*$, respectively, and the
cells in $\calR'$ corresponding to $x,y,z$ in $\calR$ by
$x^*,y^*,z^*$, respectively.  We claim that $x^*$ lies strictly
below $y^*$, which lies strictly below $z^*$.  Indeed, we can
construct a $y$-monotone curve as follows
(Figure~\ref{fig:not-convex-representation}): Start with the highest
point of $x^*$, i.e., the crossing of $f^*$ and $d^*$, follow $d^*$
to its crossing with $a^*$, follow $a^*$ to its crossing with $b^*$,
i.e., the lowest point of the cell $y^*$.  And symmetrically, we go
from the lowest point of $y^*$ (the crossing of $b^*$ and $e^*$)
along $e^*$ to its crossing with $d^*$ and along $d^*$ to its
crossing with $c^*$, i.e., the highest point of $z^*$.
  
Now, as $g$ is below $d$, but incomparable to $x_2$, the set for $g$
contains a point $p$ right of $f^*$ and left of $d^*$, i.e., $p \in
x$. Similarly, the set for $g$ contains a point $q \in z$.
Moreover the segment between $p$ and $q$ lies between the segments
$b^*$ and $d^*$ as it starts and ends there.  However, the base
segment for $y_2$ lies to the right of $d^*$ as $y_2$ is to the right
of $e$ and $a$.  Hence, if $g$ were a convex set, then the sets $g$
and $y_2$ would intersect, contradicting that $g$ is below $y_2$ in
$P$.
\end{claimproof}
 
To see that $P$ is quasi-convex take the representation $\calR$ of
$Q$, select a point $p_w$ in each cell $w$, let $s$ and $t$ be two
segments such that $s$ is on the left and $t$ is on the right of all
segments in $\calR$. For each cell $w$ of $\calR$ define T-shaped sets
for $w_1$ and $w_2$ consisting of $s$ and $t$, respectively, together
with a horizontal segment ending at $p_w$.  Finally let $g$ be the
union of $s$ and two horizontal segments, one ending at $p_x$ and one
at $p_z$.
\end{proof}

\section{On-line Curve Representation}\label{sec:connected-btw-two-lines}

In this section we prove Theorem~\ref{thm:online-curves}, i.e., we
show that there is an on-line algorithm that produces a curve
representation of any poset that is given on-line. The curves 
used for the representation are $y$-monotone.

Recall that a \emph{linear extension $L$} of a poset $P$ is a total ordering of
its elements such that if $x < y$ in $P$ this implies $x < y$ in $L$. 
Our construction maintains the invariant that at all times the curve representation
$\calC$ of the current poset $P$ satisfies the following
property~\eqref{eq:*}:
 \begin{equation}\label{eq:*}\tag{$*$}
  \begin{minipage}{0.92\linewidth}
    there is a set $\calL$ of horizontal lines such that for every
    linear extension $L$ of $P$ there is a horizontal line $\ell \in
    \calL$ such that the curves in $\calC$ intersect
    $\ell$ from left to right in distinct points in the order given by
    $L$.
  \end{minipage}
 \end{equation} 

 For the first element of the poset use any vertical segment in the
 strip and property~\eqref{eq:*} is satisfied.  Assume that for the
 current poset $P$ we have a curve representation $\calC$ with
 $y$-monotone curves respecting~\eqref{eq:*}.  
 
 Let $x$ be a new element extending $P$.  
 The elements of $P$
 are partitioned into the upset $U(x) = \{y : x<y\}$, the downset
 $D(x)=\{y : y < x\}$, and the set $I(x) = \{y : x||y \}$ of
 incomparable elements. Let $S$ be the union of all points in the
 strip between $\ell^1$ and $\ell^2$ that lie strictly to the left of
 all curves in $U(x)$ and strictly to the right of all curves in
 $D(x)$.  Note that $S$ is $y$-monotone (its intersection with any
 horizontal line is connected), $S \cap \ell^i \neq \emptyset$ for
 $i=1,2$, and that $S$ is connected since each curve in $U(x)$ lies
 completely to the right of each curve in $D(x)$.  This implies that
 for any two points $p,q \in S$ with distinct $y$-coordinates there is
 a $y$-monotone curve connecting $p$ and $q$ inside of $S$.

We use the set $\calL$ to draw the curve for $x$ as follows:

\begin{itemize}
\item 
  Choose $\varepsilon > 0$ small enough so that within the
  $\varepsilon$-tube $\ell_\varepsilon$ of any line $\ell \in \calL$
  no two curves get closer than $\varepsilon$.
\item 
  For each line $\ell \in \calL$ choose two points $q_\ell, p_\ell \in
  \ell_\varepsilon \cap S$ such that $q_\ell$ is above $\ell$ and 
  has distance at most $\varepsilon$ to the left boundary of $S$ while
  $p_\ell$ is below $\ell$ and 
  has distance at most $\varepsilon$ to the right boundary of $S$. 
  Draw a segment from $p_\ell$ to $q_\ell$.
\item If $\ell$ and $\ell'$ are consecutive in $\calL$ with $\ell$
  below $\ell'$, then we connect $q_\ell$ and $p_{\ell'}$ 
  by a $y$-monotone curve in $S$. We also connect
  the lowest $p$ and the highest $q$ by  $y$-monotone curves in
  $S$ to $\ell^1$ and  $\ell^2$ respectively.
\item 
  The curve of $x$ is the union of the segments $\overline{p_\ell q_\ell}$ and the
  connecting curves.
\end{itemize}
 
Figure~\ref{fig:online-curve-representation} illustrates the construction.
 
 \begin{figure}[htb]
  \centering
  \includegraphics[scale=.75]{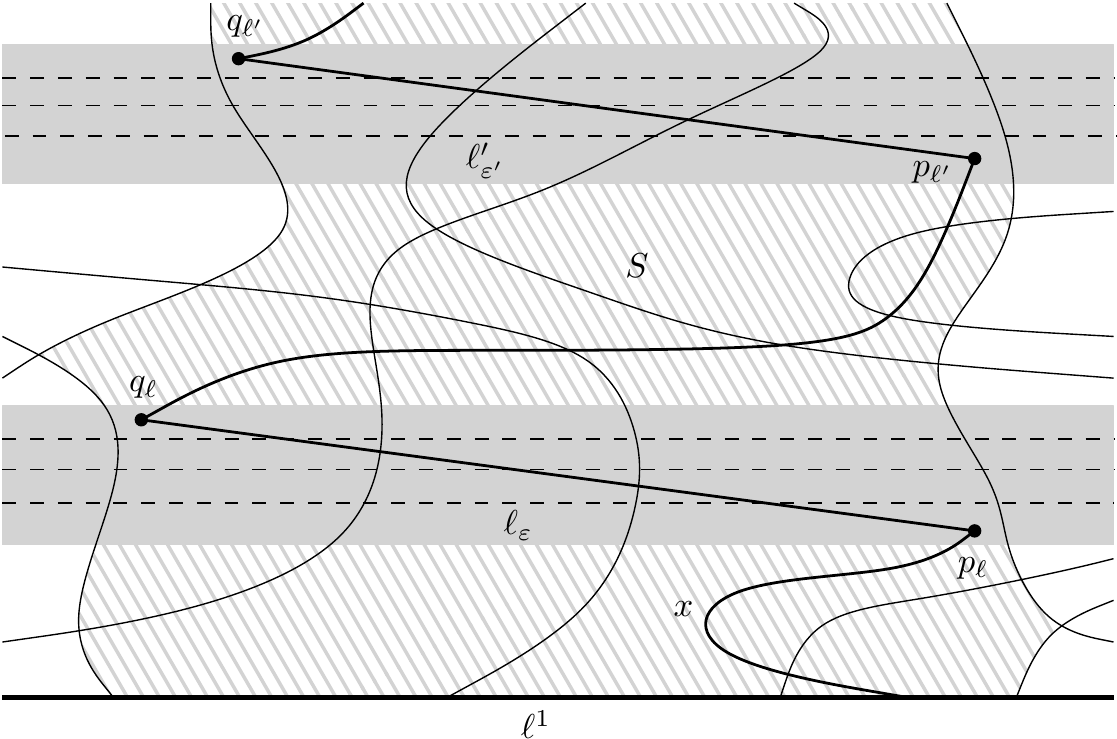}
  \caption{Constructing the curve for a new element $x$ by
    using segments within $\varepsilon$-tubes for each $\ell \in
    \calL$. Dashed horizontal lines correspond to the lines in $\calL_{P \cup \{x\}}$.}
  \label{fig:online-curve-representation}
\medskip
\end{figure}

We claim that the curve representation of $P$ together with the curve
of $x$ has property~\eqref{eq:*}.  Let $L = (\ldots,a,x,b,\ldots)$ be
an arbitrary linear extension of $P \cup \set{x}$ and let $L^x =
(\ldots,a,b,\ldots)$ be the linear extension of $P$ obtained from $L$
by omitting $x$.  Let $\ell^x \in \calL$ be the horizontal line
corresponding to $L^x$.  Within the $\varepsilon$-tube of $\ell^x$ the
segment $\overline{p_{\ell^x} q_{\ell^x}}$ contains a subsegment
$\overline{\rho_a\rho_b}$ where $\rho_a$ is a point $\varepsilon$
to the right of the curve of $a$ and $\rho_b$ is a point
$\varepsilon$ to the left of the curve of $b$.  The horizontal line
$\ell$ containing the point $\frac{\rho_a+\rho_b}{2}$ is a line
representing $L$ in $P\cup\set{x}$.  This proves property~\eqref{eq:*} for the
extended collection of curves.

The comparabilities in the intersection of all linear extensions of $P
\cup \set{x}$ are exactly the comparabilities of $P \cup
\set{x}$. Therefore, property~\eqref{eq:*} implies that the curve of
$x$ is intersecting the curves of all elements of $I(x)$. Since the
curve of $x$ is in the region $S$ it is to the right of all curves in
$D(x)$ and to the left of all curves in $U(x)$. Hence, the extended
family of curves represents $P \cup \set{x}$.

\section{Connected Sets Attached to a Line}
\label{sec:attached}

In this section we give the proof of Proposition~\ref{prop:attached-impossible}.
Actually, we prove a stronger statement by induction:

\begin{claim}
  The adversary has a strategy $S_k$ to create a family of segments
  attached to a horizontal line $h$ with clique number at most $2$
  against any on-line coloring algorithm $A$ such that there is a
  vertical line $v$
  with the properties:
 \begin{enumerate}
 \item any two segments pierced by $v$ are disjoint,
 \item every segment pierced by $v$ is attached to $h$ to the right of $v$,
  \item $A$ uses at least $k$ distinct colors on segments pierced by $v$.
 \end{enumerate}
\end{claim}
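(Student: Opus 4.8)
The plan is to prove this claim by induction on $k$, constructing the adversary strategy $S_k$ recursively from copies of $S_{k-1}$. The base case $k=1$ is trivial: the adversary presents a single segment, which any algorithm must color with at least one color. For the inductive step, I would assume the adversary has a strategy $S_{k-1}$ satisfying properties (i)--(iii) for $k-1$ colors, and build $S_k$ from several scaled and placed copies of it.

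The core idea I would pursue is the standard ``doubling'' or ``merging'' technique for online coloring lower bounds. The plan is to run two (or more) independent copies of the $S_{k-1}$ strategy in disjoint vertical slabs of the strip, placing each copy entirely to the right of its own vertical witness line $v$. After each copy forces at least $k-1$ colors on its segments pierced by its local witness line, I would identify the colors used. The key observation is that because the copies are independent and placed in separated regions, the set of $k-1$ colors forced in one copy and the set forced in another copy need not coincide. If two copies use disjoint color sets, we are already done for free; the interesting and generic case is when they overlap. So the real work is to arrange the geometry so that, after the two subgames finish, the adversary can add a small number of new segments that connect the two configurations and force a $k$-th color on segments crossing a common vertical line $v$ for $S_k$. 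Specifically, I would place a new segment piercing $v$ that must avoid the $k-1$ colors already guaranteed to appear among the segments pierced by $v$, while keeping the clique number at most $2$ by ensuring this new segment is disjoint from all but carefully chosen previously pierced segments.

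The step I expect to be the main obstacle is maintaining \emph{both} the clique-number-$2$ constraint \emph{and} the ``pierced by a common vertical line $v$, attached to the right of $v$'' structure simultaneously while forcing the new color. The clique constraint (property (i)) demands that any two segments crossing $v$ be pairwise disjoint, so I cannot simply stack up intersecting segments; instead I must exploit that attachment points to the right of $v$ with suitably chosen slopes give disjoint segments whose colors are nonetheless controlled by the adversary's adaptive choices in the subgames. The delicate bookkeeping is to guarantee that after recursing, the union of the two (or more) subgame witness sets, together with the new connecting segments, can be certified disjoint by a single witness line $v$ while the algorithm has been forced into $k$ colors on exactly those segments. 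I would handle this by choosing the slabs and the new vertical line $v$ so that $v$ lies to the left of all copies, letting every pierced segment of $S_k$ be attached to the right of $v$; the nesting of slopes then ensures pairwise disjointness along $v$.

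Concretely, I would realize the recursion so that $S_k$ presents one copy of $S_{k-1}$ and then, reacting to the color class structure it produced, presents a second copy in a region chosen to ``force a collision'' on the witness line. The number of copies and the precise placement (above/below, left/right, and the slopes) are the routine-but-fiddly part; the conceptual heart is the adaptive choice of where to place the second copy so that the $(k-1)$-color guarantee from each copy combines into a $k$-color guarantee for $S_k$ along $v$. Once this inductive construction is in place, Proposition~\ref{prop:attached-impossible} follows immediately: applying $S_k$ for arbitrarily large $k$ forces the algorithm to use arbitrarily many colors on a family of segments attached to a line with clique number at most $2$.
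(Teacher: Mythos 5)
Your high-level scaffolding---induction on $k$, two plays of $S_{k-1}$, a case analysis on the color sets, and one extra segment to force a fresh color---matches the paper's proof. But the step you dismiss as ``routine-but-fiddly'', namely the placement of the second copy, is precisely where the proof lives, and the placement you propose (two copies in disjoint vertical slabs, with a witness line to the left of everything) makes the argument fail. The failure is exactly in the case you call ``done for free'': if the two copies end up with disjoint (or merely non-identical) sets of $k-1$ colors, you do have $k$ colors in total, but property (iii) requires $k$ colors on segments pierced by a \emph{single} vertical line. With the copies in disjoint slabs, no vertical line meets segments of both copies, so neither $v_1$ nor $v_2$ nor any other line witnesses more than $k-1$ colors. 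Nor can you escape this case by ``forcing a collision'': the algorithm is free to answer the second copy with brand-new colors, so the adversary must be able to win when the color sets differ, not arrange for them to coincide.

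The paper resolves exactly this point by nesting rather than juxtaposing. After the first play produces $\mathcal{F}_1$, $v_1$ and the pierced set $V_1$, the second play of $S_{k-1}$ is confined to a small rectangle $R$ whose left side lies on $v_1$, which is disjoint from all of $\mathcal{F}_1$, and which is narrow and flat enough that every vertical line through $R$ still pierces exactly $V_1$ (the segments of $V_1$ pass over $R$ and attach to $h$ to the right of it). Consequently the second witness line $v_2$, which runs through $R$, pierces $V_1 \cup V_2$---pairwise disjoint and all attached to the right of $v_2$---so if $V_1 \cup V_2$ carries at least $k$ colors, then $v_2$ itself is the required witness and no extra segment is needed. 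Only in the remaining case, where both sets carry the \emph{identical} $k-1$ colors, does the extra segment $d$ enter: it is attached between $v_1$ and $v_2$, intersects all of $V_2$ and the line $v_1$, and avoids $V_1$, so it is forced to receive a $k$-th color and $v_1$ becomes the witness for $V_1 \cup \{d\}$. Your segment-$d$ step is essentially this second case; but without the nesting there is no line that can serve as witness in the first case, and the induction does not close.
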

\begin{claimproof}
  The strategy $S_1$ only requires a single segment with negative
  slope.  Now consider $k \geq 2$.  Fix any on-line algorithm $A$.
  The strategy $S_{k}$ goes as follows.  First the adversary uses
  $S_{k-1}$ to create a family of segments $\calF_1$ and a vertical
  line $v_1$ piercing a set $V_1\subseteq \calF_1$ of pairwise
  disjoint segments on which $A$ uses at least $k-1$ colors.  Define a
  rectangle $R$ with bottom-side on $h$, the left-side in $v_1$ 
  and small enough such that the vertical line supported by the right-side is
  piercing the same subset $V_1$ of $\calF_1$, moreover $R$ is
  disjoint from all the segments in $\calF_1$.  The adversary uses
  strategy $S_{k-1}$ again, this time with the restriction that all
  the segments are contained in $R$.  This creates a family $\calF_2$
  and a vertical line $v_2$ piercing a set $V_2\subseteq \calF_2$ of
  pairwise disjoint segments on which $A$ uses at least $k-1$ colors.
  By construction segments from $\calF_1$ and $\calF_2$ are pairwise
  disjoint. From the definition of $R$ it follows that line $v_2$
  intersects all the segments in $V_1$ and no other segments from
  $\calF_1$.  Strategy $S_k$ is completed with the creation of one
  additional segment $d$ such that $d$ is
  attached between $v_1$ and $v_2$, $d$ is intersecting all the
  segments in $V_2$ and the vertical line
  $v_1$ but it intersects none of the segments in $V_1$
  (see~Figure~\ref{fig:attached-impossible}).

 \begin{figure}[htb]
  \centering
  \includegraphics{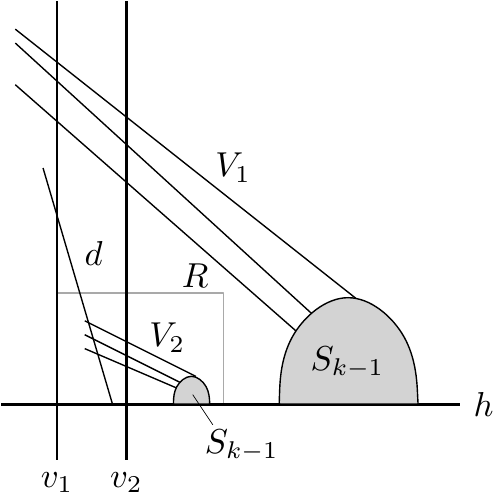}
  \caption{Strategy $S_k$ consists of two calls of strategy
    $S_{k-1}$ and an addition of an extra segment $d$. Algorithm $A$
    unavoidably uses $k$ colors on the segments intersecting $v_1$ or
    on the segments intersecting $v_2$. 
  \label{fig:attached-impossible}}
 \end{figure}

 If $A$ uses at least $k$ distinct colors on $V_1\cup V_2$ then $v_2$
 is the vertical line witnessing the invariant.  Otherwise $A$ uses
 exactly the same set of $k-1$ colors on $V_1$ and $V_2$ and since
 segment $d$ intersects all segments in $V_1$ it must be colored with
 a different color.  Thus, the vertical line $v_1$ intersecting
 $V_1\cup\set{d}$ intersects segments of at least $k$ distinct colors.
\end{claimproof}

\section{Open problems}

In this concluding section we collect some open problems related to
the results of this paper. 
 
In Figure~\ref{fig:poset-classes} there are some classes of posets
that contain interval orders and 2-dimensional orders and are
contained in the class of convex orders. For on-line coloring of the
cocomparability graphs of these classes (given with a representation)
we have the algorithm from Theorem~\ref{thm:main-convex} that uses
$O(\omega^3)$ colors.

\Item{(1)} Find an on-line algorithm that only needs $O(\omega^\tau)$
($\tau <3$) colors for coloring graphs in a class $\calG$ between
2-dimensional and convex. Interesting choices for $\calG$ would be 
trapezoid graphs, bounded tolerance graphs, triangle graphs
(or simple triangle graphs; for the definition cf.~\cite{Mer13}).
\medskip

\noindent
By restricting the curves or the intersection pattern of curves 
spanned between two lines we obtain further classes of orders 
which are nested between 2-dimensional orders and the class of all
orders. We define {\em $k$-bend orders} by restricting the number of
bends of the polygonal curves representing the elements to $k$.
Clearly, every $k+2$ dimensional order is a $k$-bend order.
We define {\em $k$-simple orders} by restricting the number of
intersections of pairs of curves representing elements of the order to $k$.

\Item{(2)} Find on-line algorithms that only need polynomially
many colors for coloring cocomparability graphs 
of $2$-simple or $1$-bend orders when a representation is given.
\medskip

\noindent
Another direction would be the study of recognition complexity.
Meanwhile the recognition complexity for all classes shown in
Figure~\ref{fig:poset-classes}, except convex orders, has been
determined (see~\cite{Mer13}). 

\Item{(3)} Determine the recognition complexity for convex orders.
\medskip

\noindent
We think that the determination of the recognition complexity 
of $2$-simple orders and $1$-bend orders are also interesting problems.

%%%%%%%%%%%%%%%%%%%%%%%%%%%%%%%%%%%%%%%%%%%%%%%%%%%%%%%%%%%%%%%%%%
\bibliographystyle{plain}
\bibliography{paper}
\end{document}